\newcommand{\bu}{\boldsymbol u}
\newcommand{\bff}{\boldsymbol f}
\newcounter{remark}
\def\theremark {\arabic{remark}}
\newtheorem{Proof}{Proof}
\newenvironment{proof}{\begin{Proof}\rm}{\hfill $\Box$ \end{Proof}}
\newtheorem{definition}{Definition}
\newtheorem{proposition}{Proposition}
\title{Can Neural Networks learn Finite Elements?}
\author{
 Julia Novo\thanks{ Departamento de
Matem\'aticas, Universidad Aut\'onoma de Madrid, Spain. Research is supported
by grant PID2022-136550NB-I00  funded by MCIN/AEI/
10.13039/501100011033 and by ERDF A way of making Europe, by
the European Union. (julia.novo@uam.es)} \and Eduardo Terr\'es \thanks{(eduardo.terrescaballero@outlook.com)}}
\date{\today}
\begin{document}
\maketitle
\abstract{The aim of this note is to construct a neural network for which the linear finite element approximation of a simple one dimensional boundary value problem is a minimum of the cost function to find out if the neural network is able to reproduce the finite element approximation. The deepest goal is to shed some light on the problems one encounters when trying to use neural networks to approximate partial differential equations}
\bigskip

{\bf Keywords:} Neural Networks, Finite elements, Partial differential equations

\section{Introduction} For a brief introduction to neural networks and their applications to the numerical approximation
of partial differential equations we refer the reader to reviews \cite{siam_rev1}, \cite{siam_rev2}. For
the origins of the finite element method and its influence in modern computing we refer to review \cite{GanderWanner2012}.

The literature concerning the use of neural networks to approach partial differential equations has increased {significantly} in recent years. Most of the results are heuristic and the ability of neural networks
to perform better than classical methods is not always clear. With this short note we would like to shed some light
on the difficulties one encounters {when} trying to {define} a neural network {to} approach a given problem better than  classical finite element methods. To this end, we choose a very simple model problem, that nevertheless allows us to
understand some general characteristics of neural networks.

When choosing a neural network to approach a problem one has to decide {its structure} and activation function. The structure of the network, a priori unknown, has a great impact on the obtained results. The use of different activation functions also leads to different approaches.
Another important question is the theoretical ability of a neural network to approximate a function arbitrarily well, see \cite{Pinkus}.

In this note, following \cite{He_et_al}, we choose the structure of the neural network in such a way that for certain values of the weights and biases the output of the network is the linear finite element approximation to the model problem. The activation function is the so called ReLU.
{With this structure, certain theoretical capabilities are guaranteed, given that} piecewise linear approximations over a certain partition can be exactly represented by the neural network.
In particular, the response of the network could be the finite element approximation itself, with well-known approximation properties.
However, we observed a first problem. Even in this case in which the structure of the network is {known}, the result of the optimization problem that finds the values of the weights and biases can lead to an approximation far away from the linear finite element approximation that
a priori gives a minimum of the optimization problem. The idea behind this fact is very simple{: the problem we are trying to solve is highly over-determined, thus it is difficult to find the optimal value.} On a second try, we add some {initial} information to the neural network concerning the values of weights and biases to lead {somehow} the answer toward the output we know is a minimum of the problem.
As we show in the numerical experiments, we consider several levels in the procedure of adding information.
However, if we know how to compute the linear finite element approximation in a deterministic way it probably does not make sense to use neural networks to get this approximation.
So, the question is, can we find a balance between the deterministic information we supply and the free values we keep (output values of the optimization problem) so that the resulting approximation has the same or, if possible, better approximation capabilities than the standard finite element approximation?
This question can perhaps be re-formulated in the following way:  can physics-informed{, or similarly defined,} neural networks, \cite{mis_mo}, give better approximations than standard methods?
Our conclusion is that {treating neural networks as black boxes, without providing sufficient domain-specific insights does not offer guarantees for success.}
At least with the procedure we followed, in our simple model problem we do not obtain, in general, better approximations than the finite element method.
Moreover, we  only obtain neural network approximations close to those of the finite element method
in our last try, in which we have fixed so many weights and biases, that the response of the neural network, regardless of the output values of the optimization problem, is always a continuous piecewise linear approximation with respect to the partition in which the linear finite element approximation is defined.

This is not the first reference to pose the above question. In \cite{schonlieb_et_al}, the authors make a deep comparison between PINNs and finite elements.
Their study suggests that for certain classes of PDEs for which classical methods are applicable, PINNs do not produce better results.
As the authors of \cite{schonlieb_et_al} state, PINNs could however be efficient in high-dimensional problems for which classical techniques are prohibitively expensive and when combining PDEs and data.

The outline of the paper is as follows.
In Section 2 we introduce the model problem (based on a one dimensional convection-diffusion equation) and describe the neural networks that mimic finite elements.
In Section 3 we present a series of numerical experiments in which we distinguish two cases, corresponding to diffusion dominated and convection dominated examples.
In the convection dominated case, instead of the standard linear finite element approximation we try to reproduce the stabilized streamline-upwind Petrov-Galerkin (SUPG) linear finite element approximation
\cite{BH}  for which a {different cost function} from that of the standard method is used.
\section{Model problem. Neural networks that mimic finite elements}
As a model problem we consider the following steady convection-diffusion problem in one dimension. Find $u\in C^2(0,1)\cap C[0,1]$ such that
\begin{eqnarray}\label{eq_model}
    &&-\epsilon \, u''+u'=1,\quad x\in(0,1),\\
    &&u(0)=u(1)=0,\nonumber
\end{eqnarray}
where $\epsilon$ is a positive parameter. The weak solution of \eqref{eq_model} is: find $u\in H_0^1(0,1)$ such that
\begin{eqnarray}\label{eq_model2}
    \epsilon \, (u',v')+(u',v)=(1,v),\quad \forall v\in H_0^1(0,1),
\end{eqnarray}
where $(\cdot,\cdot)$ represents the $L^2(0,1)$ inner product and we have used standard notation for the Sobolev space $H_0^1(0,1)$.
For any integer $N>0$ we consider a partition
\begin{equation}\label{partition}
    \tau_N=\left\{0=x_0<x_1<\ldots<x_N=1\right\},
\end{equation}
of $[0,1]$ and denote by $h_i=x_{i+1}-x_i$. We denote by $V_h$ the linear finite element space of continuous functions
that are piecewise linear with respect to the partition $\tau_N$ and satisfy the homogeneous Dirichlet boundary conditions of the problem.
Then, the linear finite element approximation to the model problem \eqref{eq_model} satisfies the weak form \eqref{eq_model2} over $V_h$: find $u_h\in V_h$ such that
\begin{eqnarray}\label{eq_gal}
    \epsilon \, (u_h',v_h')+(u_h',v_h)=(1,v_h),\quad \forall v_h\in V_h.
\end{eqnarray}
With the standard nodal Lagrange basis functions, $\phi_i\in V_h$, defined by $\phi_i(x_j)=\delta_{i,j}$ we can write
$u_h(x)=\sum_{i=1}^{N-1}u_h(x_i)\phi_i(x)$. Then, $u_h$ is unique and its vector of coefficients
$\bu=\left[u_h(x_1),\ldots,u_h(x_{N-1})\right]^T$ is the unique solution of the linear system
$A \bu=\bff$, with $A=\epsilon K+C$ and $\bff=[f_1,\ldots,f_{N-1}]^T$ with $f_i=(h_{i-1}+h_i)/2$. The
matrices
$K$ and $C$ are tridiagonal and its elements are:

\begin{alignat}{3}
        &k_{i,i-1}=-\frac{1}{h_{i-1}}, \quad
        & &k_{i,i}=\frac{1}{h_{i-1}}+\frac{1}{h_i}, \quad
        & &k_{i,i+1}=-\frac{1}{h_i}\label{coef}
        \\
        &c_{i,i-1}=-\frac{1}{2},
        & &c_{i,i}=0,
        & &c_{i,i+1}=\frac{1}{2}.\notag
\end{alignat}
Let us denote by $\sigma$ the ReLU activation function defined as $\sigma(x)=x$ for $x\ge 0$ and $\sigma(x)=0$ for $x<0$. Then, following \cite{He_et_al}, we can write
$$
    \phi_i(x)=\sigma \left( \frac{x - x_{i-1}}{h_{i-1}} \right)
    - \sigma \left( (x - x_{i}) \left( \frac{1}{h_{i-1}} + \frac{1}{h_{i}} \right) \right)
    + \sigma \left( \frac{x - x_{i+1}}{h_{i}} \right).
$$
Following the notation in \cite{siam_rev1}, any of the basis functions can be written as a neural network
of the form
\begin{equation}\label{laphii}
    \phi_i(x)=W_i^{[3]}\sigma(W_i^{[2]}x+b_i^{[2]}), \quad i=1,\ldots,N-1,
\end{equation}
where
\begin{align}
    W_{i}^{[2]}                                                                          & =
    \begin{pmatrix}
        \frac{1}{h_{i-1}}                   \\
        \frac{1}{h_{i-1}} + \frac{1}{h_{i}} \\
        \frac{1}{h_{i}}
    \end{pmatrix}, \! &
    b_{i}^{[2]}                                                                          & =
    \begin{pmatrix}
        \frac{-x_{i-1}}{h_{i-1}}                                                \\
        - x_{i} \left( \frac{1}{h_{i-1}} + \frac{1}{h_{i}} \right) \label{W2b2} \\
        \frac{-x_{i+1}}{h_{i}}
    \end{pmatrix}, \\\label{W3}
    W_{i}^{[3]}                                                                          & =
    \begin{pmatrix}
        1, \, -1, \, 1
    \end{pmatrix}.
\end{align}
We now define a neural network that mimics the finite element approximation.
\begin{definition}  Let $F: {\Bbb R} \to {\Bbb R}$, be a neural network with three layers
    {where layer 1 is the Input layer and layer 3 is the Output layer,
    and layers 1, 2 and 3 have 1, $3(N-1)$ and 1 neurons, respectively.}
    More precisely, for $W^{[2]} \in {\Bbb R}^{3(N-1) \times 1}$, $b^{[2]} \in {\Bbb R}^{3(N-1)}$ and $W^{[3]} \in {\Bbb R}^{1 \times 3(N-1)}$,
    \begin{equation*}
        F (x)
        = W^{[3]} \sigma (W^{[2]} x + b^{[2]}).
    \end{equation*}

\end{definition}
\begin{proposition}\label{prop1}
    Let us define $W^{[2]}, b^{[2]}$ and $W^{[3]}$ by:
    \begin{align*}
        W^{[2]} =
        \begin{pmatrix}
            W_{1}^{[2]} \\
            \vdots      \\
            W_{i}^{[2]} \\
            \vdots      \\
            W_{N-1}^{[2]}
        \end{pmatrix}, \;
        b^{[2]} =
        \begin{pmatrix}
            b_{1}^{[2]} \\
            \vdots      \\
            b_{i}^{[2]} \\
            \vdots      \\
            b_{N-1}^{[2]}
        \end{pmatrix}, \;
        W^{[3]} =
        \begin{pmatrix}
            u_h(x_1) \, W_{1}^{[3]}   \\
            \vdots                    \\
            u_h(x_{i}) \, W_{i}^{[3]} \\
            \vdots                    \\
            u_h(x_{N-1}) \, W_{N-1}^{[3]}
        \end{pmatrix} ^ T
    \end{align*}
    where $W_i^{[2]}, b_i^{[2]}$ and $W_i^{[3]}$ are defined in \eqref{W2b2} and \eqref{W3}.
    Then $F \equiv u_h$.
\end{proposition}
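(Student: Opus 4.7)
The plan is to verify the identity $F \equiv u_h$ by direct block-wise computation, exploiting the fact that the ReLU activation $\sigma$ acts componentwise. Because the matrices $W^{[2]}$, $b^{[2]}$ and $W^{[3]}$ have been assembled by stacking the building blocks indexed by $i=1,\ldots,N-1$, the network evaluation should decompose into a sum of $N-1$ independent contributions, each reproducing one basis-function term of the finite element expansion $u_h = \sum_i u_h(x_i)\phi_i$.

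First, I would write out the $3(N-1)$-dimensional column vector $W^{[2]} x + b^{[2]}$ and observe that its entries split naturally into $N-1$ consecutive groups of three: the $i$-th group is precisely $W_i^{[2]} x + b_i^{[2]}$, by the block structure postulated in the statement. Applying $\sigma$ entry by entry preserves this block structure, so $\sigma(W^{[2]} x + b^{[2]})$ is the concatenation of the vectors $\sigma(W_i^{[2]} x + b_i^{[2]})$ for $i=1,\ldots,N-1$.

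Next, I would note that the row vector $W^{[3]}$ is, after the indicated transposition, the horizontal concatenation of the row vectors $u_h(x_i)\,W_i^{[3]}$. Therefore the product $W^{[3]}\sigma(W^{[2]} x + b^{[2]})$ reduces to the scalar sum $\sum_{i=1}^{N-1} u_h(x_i)\, W_i^{[3]}\,\sigma(W_i^{[2]} x + b_i^{[2]})$, with no cross terms between different indices $i$. By the single-basis-function identity \eqref{laphii}, each summand equals $u_h(x_i)\,\phi_i(x)$, and the total sum is $u_h(x)$ by definition of the nodal Lagrange expansion.

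The main obstacle is essentially bookkeeping: one must check carefully that the stacking conventions for $W^{[2]}$ (vertical block), $b^{[2]}$ (vertical block) and $W^{[3]}$ (horizontal block after transposition) are mutually compatible, so that the block multiplication really yields a diagonal-in-$i$ decomposition. Once this is made explicit there is no genuine analytic content, and the proposition reduces to the already-established representation \eqref{laphii} of each Lagrange basis function as a three-neuron ReLU network.
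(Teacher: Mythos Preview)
Your proposal is correct and follows exactly the approach the paper takes: the paper's proof simply states that the result is immediate from the nodal expansion $u_h(x)=\sum_{i=1}^{N-1}u_h(x_i)\phi_i(x)$, the representation \eqref{laphii}, and Definition~1. Your write-up merely spells out the block bookkeeping that the paper leaves implicit, so there is no substantive difference.
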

\begin{proof}
    The proof is immediate using the definition of  $u_h(x)=\sum_{i=1}^{N-1}u_h(x_i)\phi_i(x)$, the expression of the basis functions as neural networks \eqref{laphii}
    and  Definition 1.
\end{proof}
Proposition 1 says that the neural network $F$ mimics the finite element approximation in the sense that there exist values of the weights and biases for which we recover the finite element approximation. We observe that
there are different values of weights and biases that satisfy the property $F \equiv u_h$ since $\sigma(\alpha x)=\alpha \sigma(x)$ for any $\alpha>0$.

Once the structure of the neural network is chosen (depending only on the value $N$), we still need to define a cost function.
{Since we want the network to mimic the finite element approximation, we define the cost function as follows}
\begin{eqnarray}
    {\rm Cost}&=&\sum_{i=1}^{N-1}\left(-\epsilon \left[-\frac{1}{h_{i-1}}F(x_{i-1})+\left(\frac{1}{h_{i-1}}+\frac{1}{h_i}\right)F(x_i)-\frac{1}{h_i}F(x_{i+1})\right]\right.\nonumber\\
    &&\left.+\frac{1}{2}\left[F(x_{i+1})-F(x_{i-1})\right]-\frac{1}{2}(h_{i-1}+h_i)\right)^2+F(0)^2+F(1)^2.
    \label{cost_1}
\end{eqnarray}
It is easy to check that choosing $F(x_i)=u_h(x_i),$ $i=0,\ldots,N$ then ${\rm Cost=0}$. Then, according
to Proposition 1, there exist weights and biases for which there is a minimum of the optimization problem.

It is well-known that in the convection dominated regime (in our case $\epsilon$ small in \eqref{eq_model})
the standard finite element approximation produces spurious oscillations unless the mesh size is {sufficiently small}. Stabilized approximations are ussually applied to mitigate this problem. The so called
SUPG  method is one of the most popular finite element stabilizations, \cite{BH}. Then, in the convection dominated regime instead of \eqref{cost_1} we consider
\begin{eqnarray}
    {\rm Cost}&=&\sum_{i=1}^{N-1}\left(-\epsilon \left[-\frac{1}{h_{i-1}}F(x_{i-1})+\left(\frac{1}{h_{i-1}}+\frac{1}{h_i}\right)F(x_i)-\frac{1}{h_i}F(x_{i+1})\right]\right.\nonumber\\
    &&\left.+\left[F(x_{i})-F(x_{i-1})\right]-\frac{1}{2}(h_{i-1}+h_i)\right)^2+F(0)^2+F(1)^2.
    \label{cost_2}
\end{eqnarray}
It is easy to check that  $F(x_i)=u_h^s(x_i),$ $i=0,\ldots,N$, with $u_h^s$ the linear finite element SUPG approximation to problem  \eqref{eq_model} with stabilization parameter $\delta_i=h_{i-1}/2$ in $(x_{i-1},x_i)$, satisfies ${\rm Cost}=0$.
\section{Numerical experiments}
We consider the neural network of Definition 1. In the numerical experiments we use the gradient descent method with back propagation. In the sequel for simplicity we consider a uniform partition \eqref{partition} for different values of $N$. We denote by $\eta$, the learning rate, and $N_{\rm iter}$ the
maximum number of iterations in the gradient descent method. We have tried to add a regularization term to the Cost function of the form $\beta \|w\|^2$, where $\|w\|^2$ represents the $L^2$ norm of the vector that contains all the weights and biases. However, most of the experiments are done with $\beta=0$, since a small value
of $\beta$ does not affect the results and by increasing the value of $\beta$ we get worse results. We have carried out two types of experiments. In the diffusion dominated regime we consider the cost function \eqref{cost_1}. In the convection dominated regime we consider the cost function \eqref{cost_2}.
\subsection{Diffusion dominated regime}
\begin{figure}[h]
    \begin{center}
        \includegraphics[height=4.5truecm]{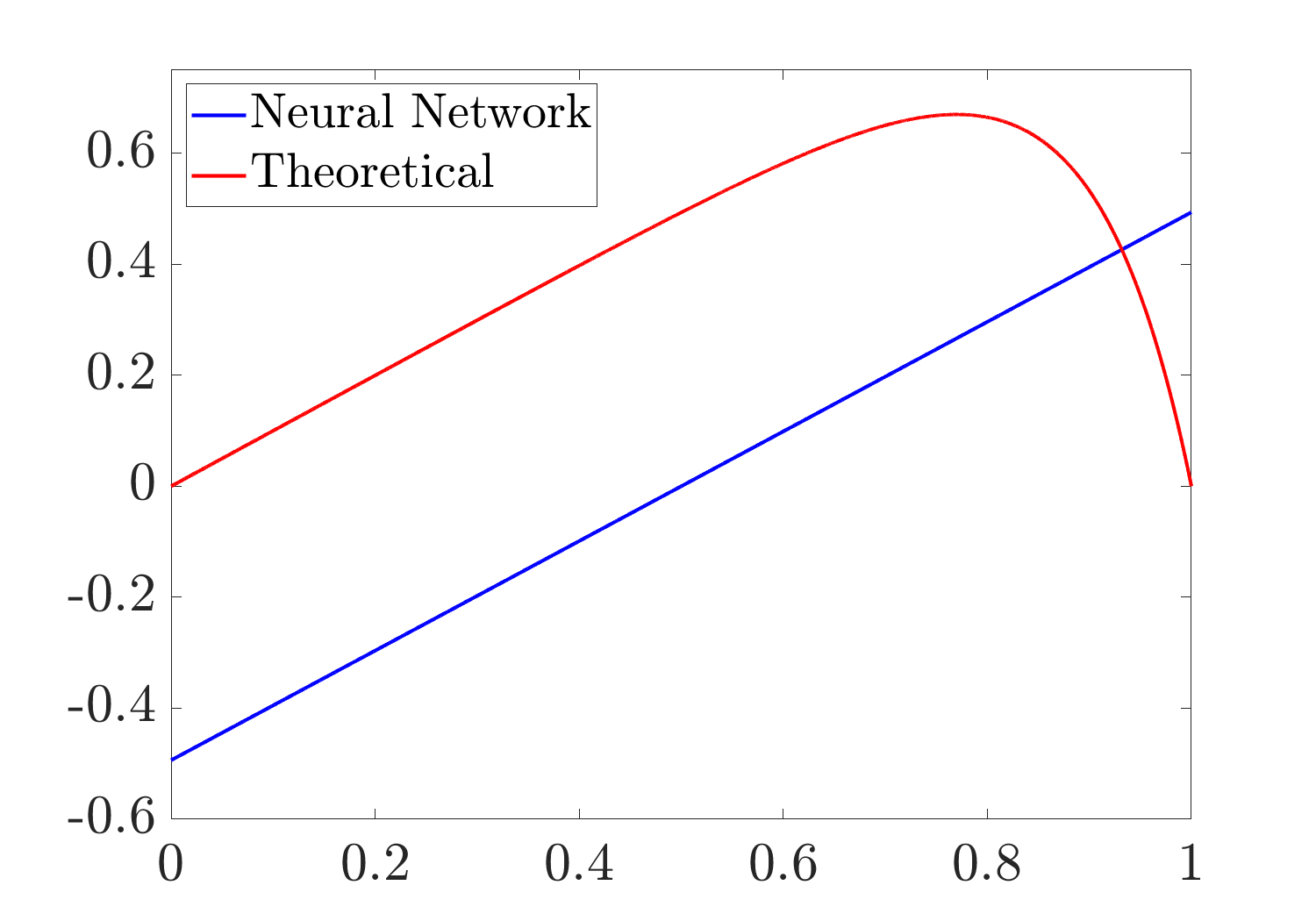}\
        \includegraphics[height=4.5truecm]{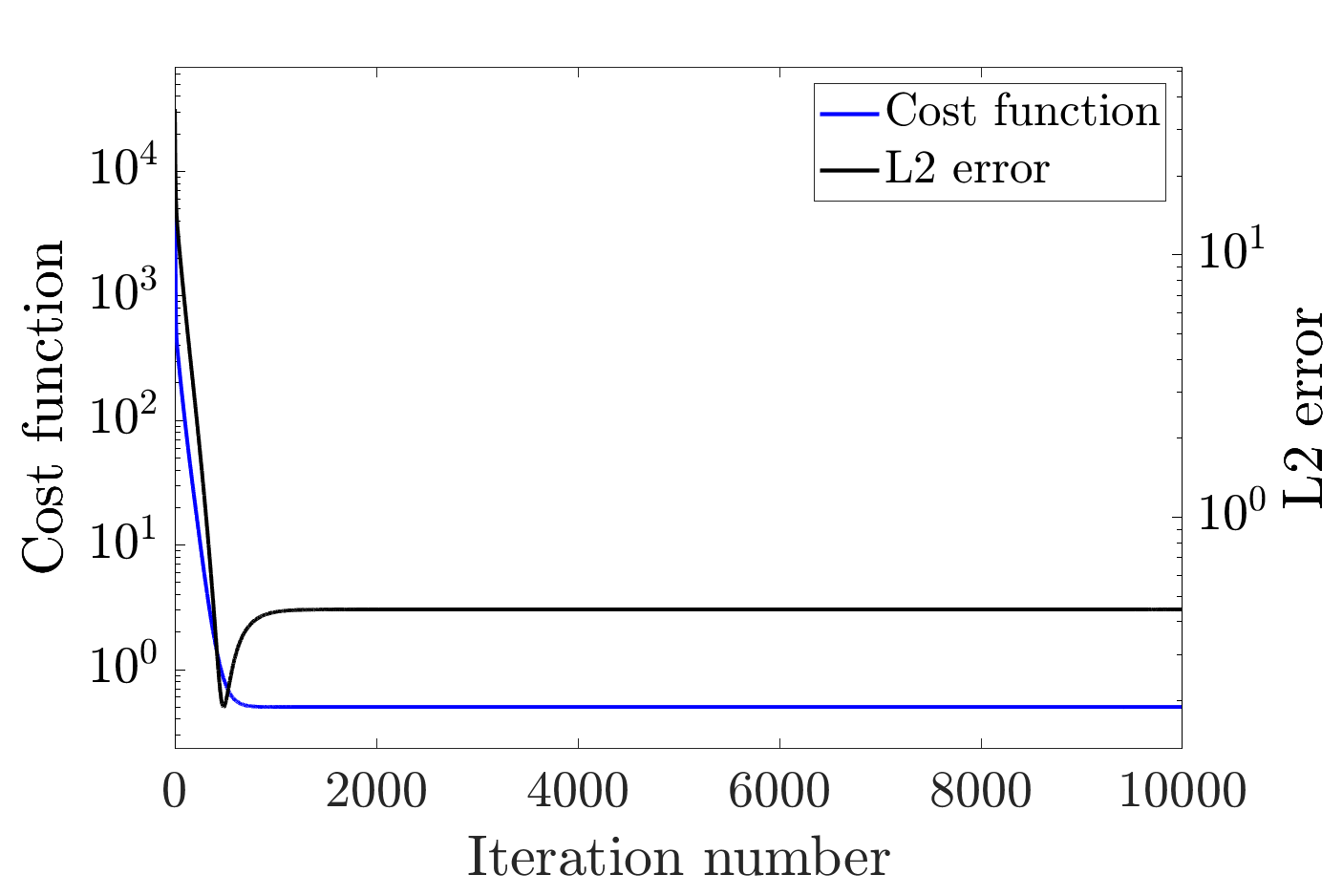}
        \begin{caption}{\label{fig1}$N=40$, $N_{\rm iter}=10^4$, $\eta=10^{-4}$, $\beta=10^{-4}$.}
        \end{caption}
    \end{center}
\end{figure}
We fix the value of $\epsilon=0.1$ along this section.
In a first experiment all the parameters, weights and biases, in the
neural network are free and we initialize them with random values. We observe that, while the finite element approximation has $N-1$ free parameters, our neural network has $9(N-1)$. In Figure \ref{fig1} we have plotted, on the left, the neural network approximation in blue and the exact solution in red and, on the right, the
values of the cost function in blue and the $L^2$ error between the exact solution and the neural network approximation in black. Let us observe that we {use different scales on the left and right axis} for the picture on the right. The left axis is the scale for the cost function while the right axis is the scale for the error. After less than 1000 iterations the value of the cost function stabilizes at around 0.5.
With this procedure the neural network is not able to reach a minimum of the cost function.
{The main issue the network faces here is solving a highly over-determined problem}.
We have tried different values of $N$, $N_{\rm iter}$, $\eta$ and $\beta$ without improving (even worsening) the results shown in Figure \ref{fig1}.

{
    In a second experiment, we try to add some initial information to the neural network with the aim of improving the results obtained in the previous failed attempt.
    Along this line, we initialize $W^{[2]}$ and $b^{[2]}$ with the values \eqref{W2b2} and, as before, we use random values for $W^{[3]}$.
    After initialization, the neural network is free to adjust the values of $W^{[2]}$ and $b^{[2]}$ in the
    pursuit of a configuration that minimizes the cost function and, potentially, outperforms the finite element solution.
}
Let us observe that we can write the neural network as $F(x)=\sum_{i=1}^{3(N-1)}w^3_i\sigma (w^2_ix+b^2_i)$ which means that $F(x)$ is a piecewise linear function over a mesh with nodes $-b^2_i/w^2_i.$ If all the nodes are positive then $F(0)=0$ while $F(1)=\sum_{i=1}^{3(N-1)}w^3_i\sigma (w^2_i+b^2_i)$. In the experiments we have observed that $F(0)=0$ while $F(1)$ is small (not strongly imposed).
\begin{figure}[h]
    \begin{center}
        \includegraphics[height=4.5truecm]{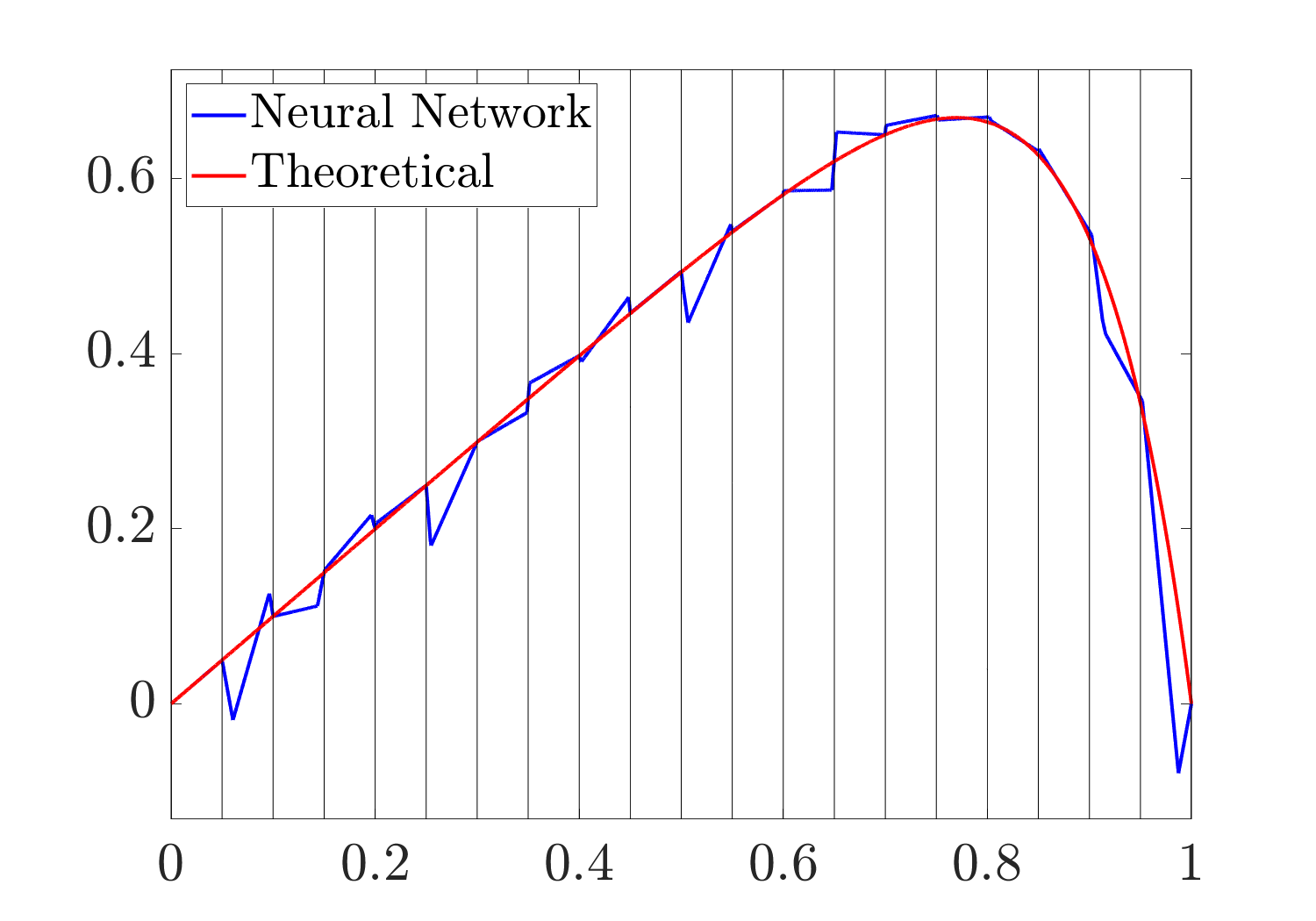}\
        \includegraphics[height=4.5truecm]{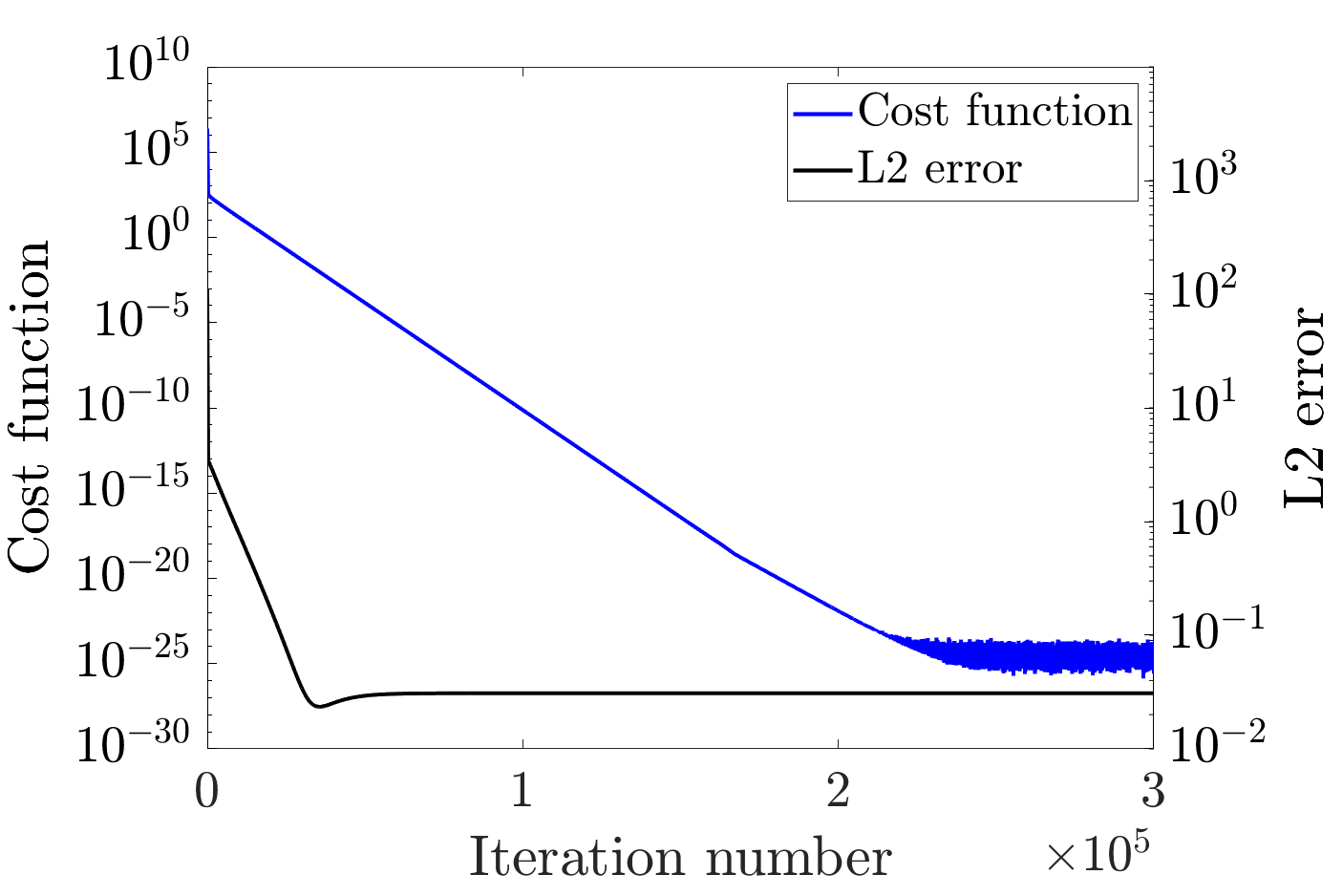}
        \begin{caption}{\label{fig2}$N=20$, $N_{\rm iter}=3\times10^5$, $\eta=10^{-6}$, $\beta=0$.}
        \end{caption}
    \end{center}
\end{figure}
In Figure \ref{fig2} we have plotted on the left the approximation we have obtained and on the right the
values of the cost function and the error in the neural network, as the number of iterations increases.
The experiment is done for $N=20$ and we can observe (see the vertical lines on the left picture corresponding
to values of the nodes $x_j=j/20$, $j=1,\ldots, 19$) that the method gives an approximation with small error at the nodes of
the partition $\tau_{20}=\left\{x_j\right\}_{j=1}^{19}$. As expected, the approximation is piecewise linear
but over a mesh different from $\tau_{20}$, which means that the neural network has modified (at least some) of the initial given
values for $W^{[2]}$ and $b^{[2]}$. The fact that the neural network is accurate over $\tau_{20}$ but
has a big error in the extra nodes makes sense since the {cost function is defined over} the nodes of $\tau_{20}$ and does not have any added information about other extra nodes. Let us observe that the cost function \eqref{cost_1}
is blind for the nodes not belonging to $\tau_{20}$ which means that the spurious oscillations of the neural network
approximation at the extra nodes cannot be avoided with this algorithm.
At this point it would be interesting to be able to add some information to the neural network (cost function) {in order} to produce a better approximation than $u_h$.
Taking into account that the response $F(x)$ is a piecewise linear approximation over a mesh with $3(N-1)$ nodes while $u_h$ is based on $N-1$ this could be, in principle, possible.
However, our aim in this experiment is to check {whether} a neural network for which the linear finite element approximation is a minimum of the cost function is able by itself to reproduce (learn)
the linear finite element approximation.

In Figure \ref{fig4} we have represented on the left the approximation corresponding to $N=40$ with
a smaller value of the learning rate $\eta=10^{-7}$. As before, we mark with vertical lines the
nodes of the uniform partition $\tau_{40}$, at which the neural network gives a small error. On the
right of Figure \ref{fig4} we have represented the approximation for $N=100$. Spurious oscillations
appear in both approximations with smaller local maximum and minimum values in the last
case: $N=100$.

\begin{figure}[h]
    \begin{center}
        \includegraphics[height=4.5truecm]{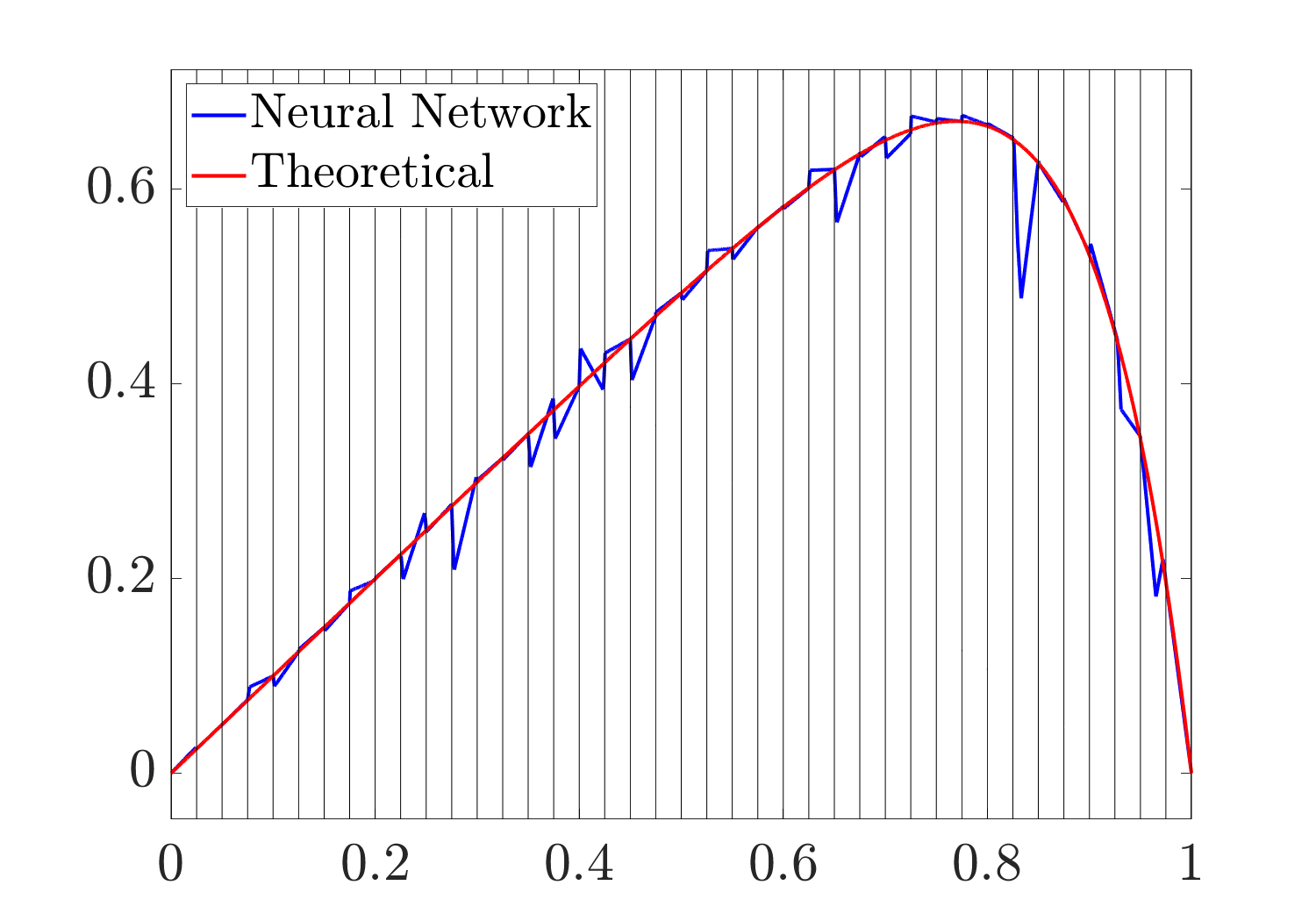}\
        \includegraphics[height=4.5truecm]{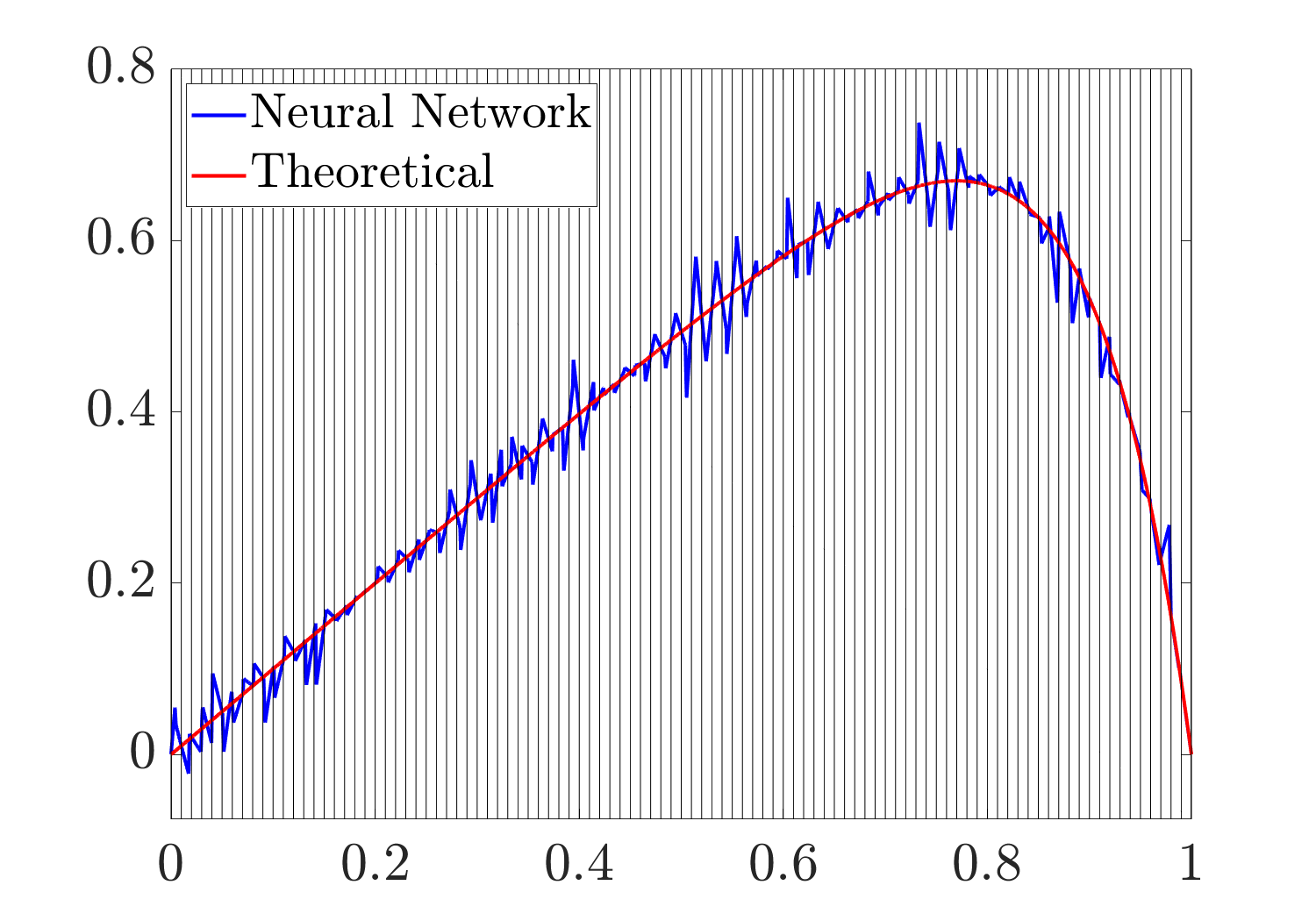}
        \begin{caption}{\label{fig4}
            $N=40$, $N_{\rm iter}=5\times10^5$, $\eta=10^{-7}$ and $\beta=0$ on the left and
            $N=100$, $N_{\rm iter}=3\times10^5$, $\eta=10^{-8}$ and $\beta=0$ on the right.}
        \end{caption}
    \end{center}
\end{figure}
In our third experiment we try to lead the network to the linear finite element approximation in a stronger way. To this end, we fix the values of $W^{[2]}$ and $b^{[2]}$ to those in \eqref{W2b2} and keep only $3(N-1)$ free parameters, those in $W^{[3]}$. We observe that in this case our neural network can be written as
\begin{equation}\label{fix}
    F(x)=\sum_{i=1}^{N-1}w^{3,1}_i\sigma\left(\frac{x-x_{i-1}}{h_{i-1}}\right)+w^{3,2}_i\sigma\left((x - x_{i}) \left( \frac{1}{h_{i-1}} + \frac{1}{h_{i}} \right)\right)+w^{3,3}_i\sigma\left(\frac{x - x_{i+1}}{h_{i}}\right).
\end{equation}
This means that the response of the network is a continuous piecewise linear function with respect to the original partition $\tau_N$ that satisfies in a strong way the boundary condition $F(0)$. In our numerical experiments we have plotted the resulting functions $F(x)$ at different stages of the iterative algorithm (different values of the number
of iterations) and we have observed that the main effort of the neural network seems to be to enforce the condition $F(1)=0$.
\begin{figure}[h]
    \begin{center}
        \includegraphics[height=4.5truecm]{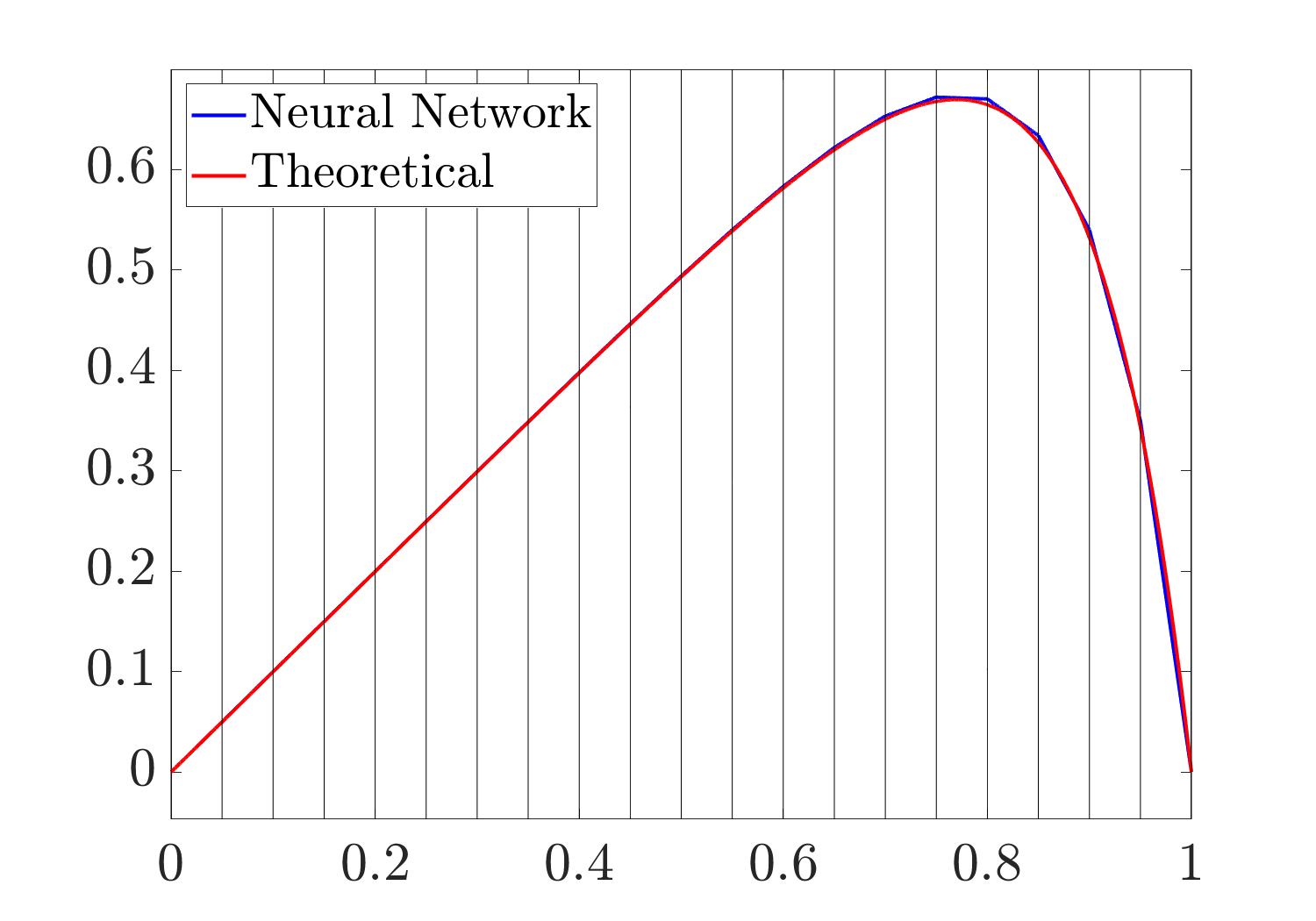}\
        \includegraphics[height=4.5truecm]{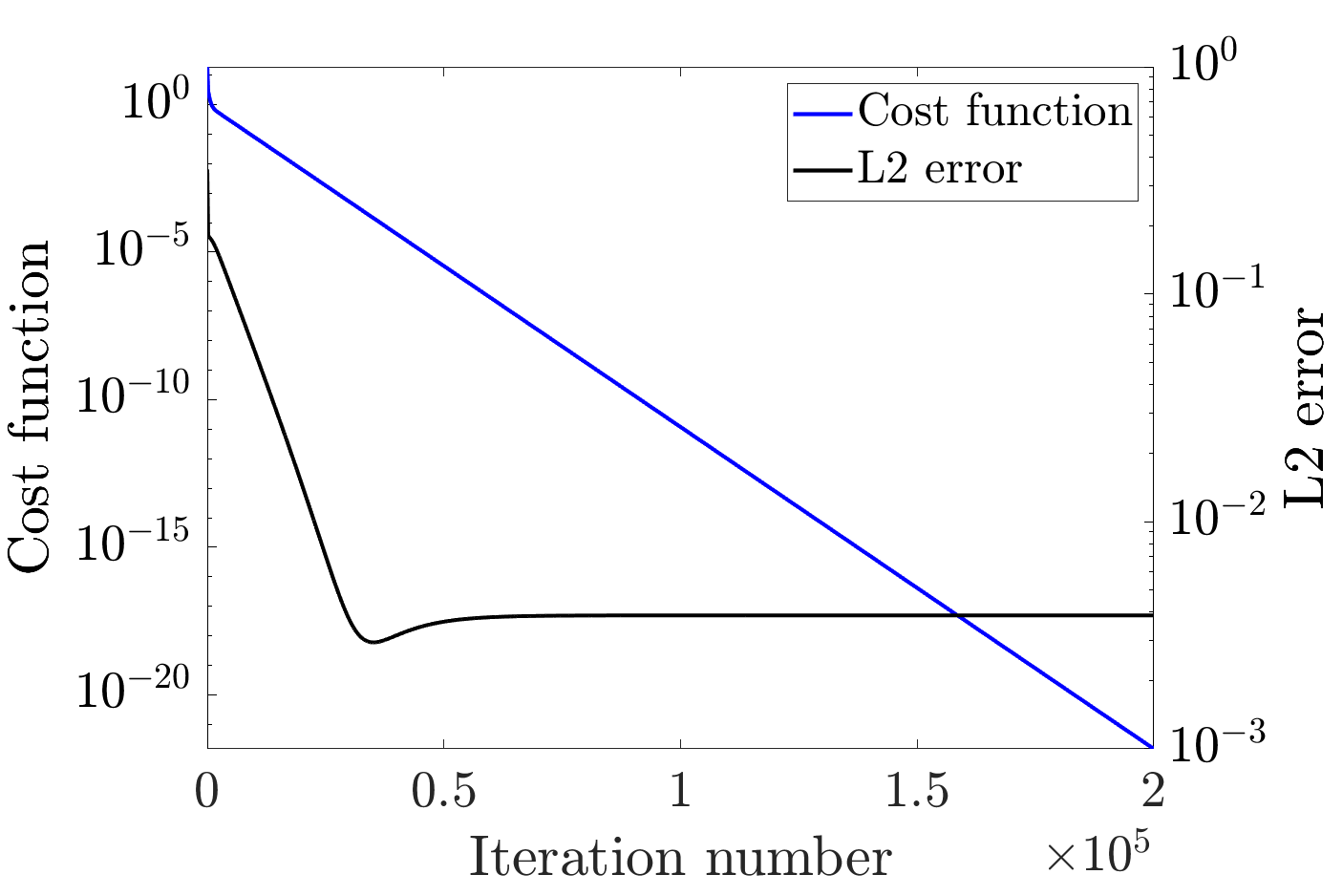}
        \begin{caption}{\label{fig5}$N=20$, $N_{\rm iter}=2\times10^5$, $\eta=10^{-6}$, $\beta=0$.}
        \end{caption}
    \end{center}
\end{figure}

In Figure \ref{fig5} we can see the neural network approximation on the left and the cost function and error
of the approximation on the right for $N=20$. In this case the final
approximation generated by the neural network is a good approximation and essentially indistinguishable from the
linear finite element approximation $u_h$. We have
obtained similar results for $N=40$ and $N=100$. In Figure \ref{fig6} we plot the absolute value of the difference between the linear finite element and the neural network approximations for $N=20$, $40$ and $100$. We observe that in all
cases the error
between both approximations increases from a point around $x=0.6$ and has a maximum at $x=1$. The errors
at $x=1$ are around $10^{-11},$ $10^{-8}$ and $10^{-3}$ for $N=20$, $40$ and $100$, respectively.

\begin{figure}[h]
    \begin{center}
        \includegraphics[height=3truecm]{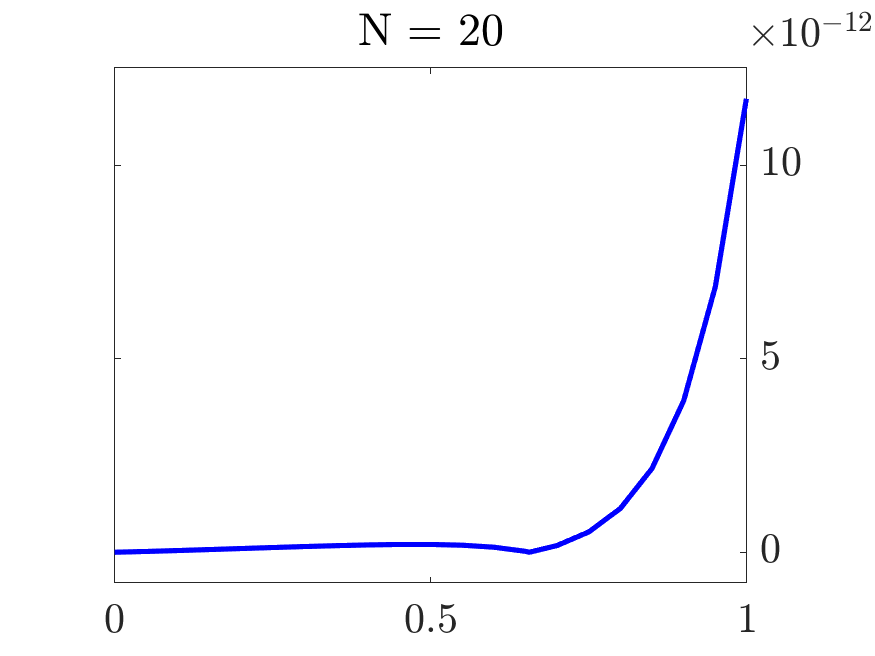}\
        \includegraphics[height=3truecm]{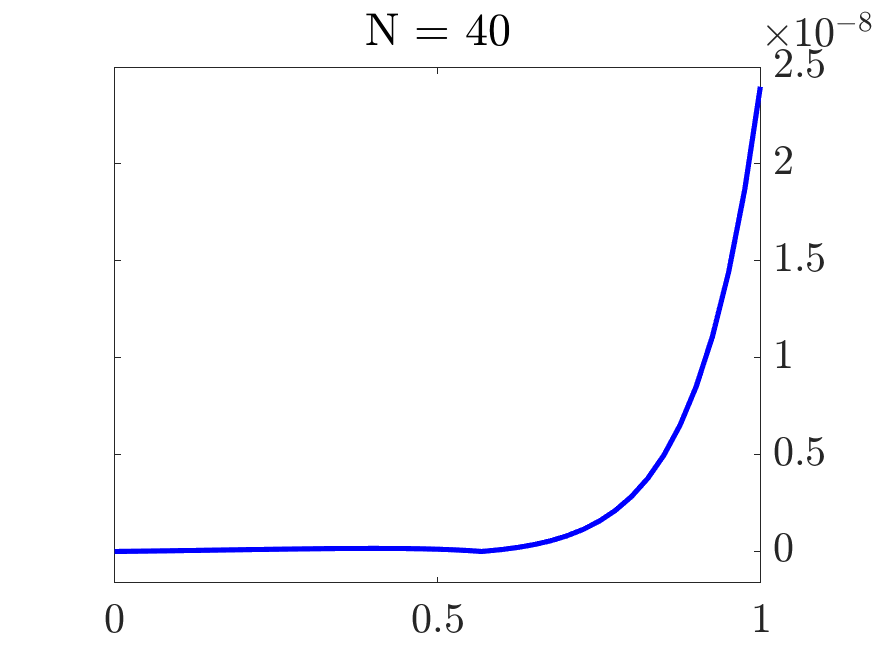}\
        \includegraphics[height=3truecm]{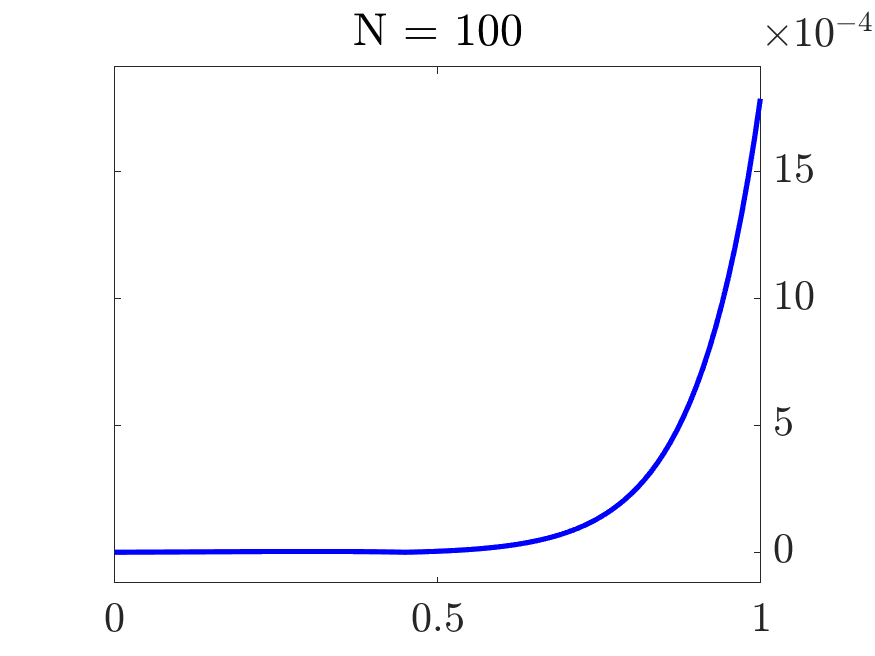}
        \begin{caption}{\label{fig6}Absolute value of errors between finite element and neural network approximations
                for $N=20, 40$ and $100$.}
        \end{caption}
    \end{center}
\end{figure}

\begin{table}[h]
    \begin{center}
        \begin{tabular}{|c|c|c|c|c|}
            \hline
                    & $L^2$ {\rm FEM} & $H^1$ {\rm FEM} & $L^2$ {\rm NN} & $H^1$ {\rm NN} \\ \hline
            $N=20$  & $3.87e-3$       & $3.20e-1$       & $3.87e-3$      & $3.20e-1$      \\
            \hline
            $N=40$  & $9.72e-4$       & $1.61e-1$       & $9.72e-4$      & $1.61e-1$      \\
            \hline
            $N=100$ & $1.56e-4$       & $6.45e-2$       & $3.30e-4$      & $2.60e-1$      \\
            \hline
        \end{tabular}
    \end{center}
    \caption{Errors in the finite element and neural network approximations.}
    \label{tabla1}
\end{table}

As we can also observe in Table \ref{tabla1}, in which we have represented the errors in the finite element and neural network approximations, both approximations give the same errors for $N=20$ and $N=40$, both in $L^2$ and $H^1$ norms, while for $N=100$ the finite element approximation produces smaller errors. In our experiments we observed that for $N=100$, both the cost function and the error are decreasing functions (they do not stabilize). We
stopped the algorithm after $5\times 10^5$ iterations.
We guess that by increasing the number of iterations, the network
could had finally reached the error of the finite element approximation as in the
previous cases: $N=20$ and $N=40$.
Our aim is, however,  not to reproduce the linear finite element approximation that can be obtained solving the linear system described in \eqref{eq_gal}-\eqref{coef} but find out how much information needs an a priori blind neural network  with a cost function for which the linear finite element is a minimum to learn (or reproduce) this
approximation. Since, in general, one does not even know the structure of the neural network, with this simple experiment we just want to  stand out the problems that Physical Informed Neural Networks (or similarly based networks) may have in obtaining good approximations to partial differential equations.

\begin{table}[h]
    \begin{center}
        \begin{tabular}{|c|c|c|c|c|}
            \hline
                    & $L^2$ {\rm SUPG} & $H^1$ {\rm SUPG} & $L^2$ {\rm NN} & $H^1$ {\rm NN} \\ \hline
            $N=20$  & $1.25e-1$        & $21.91$          & $1.25e-1$      & $21.91$        \\
            \hline
            $N=40$  & $8.52e-2$        & $21.45$          & $5.79e-2$      & $42.67$        \\
            \hline
            $N=100$ & $4.87e-2$        & $20.04$          & $2.18e-2$      & $134.37$        \\
            \hline
        \end{tabular}
    \end{center}
    \caption{Errors in the SUPG finite element and neural network approximations.}
    \label{tabla2}
\end{table}

\subsection{Convection dominated regime}
We have reproduced the previous experiments in the convection dominated regime.
In this section  we take $\epsilon=0.001$ and, as before, we consider a uniform partition \eqref{partition} for different values of $N$.
In this case, instead of \eqref{cost_1} we use the cost function \eqref{cost_2}.
In view of the previous results we did not expect to get much different results (probably worse) in this more difficult problem.
\begin{figure}[h]
    \begin{center}
        \includegraphics[height=4.5truecm]{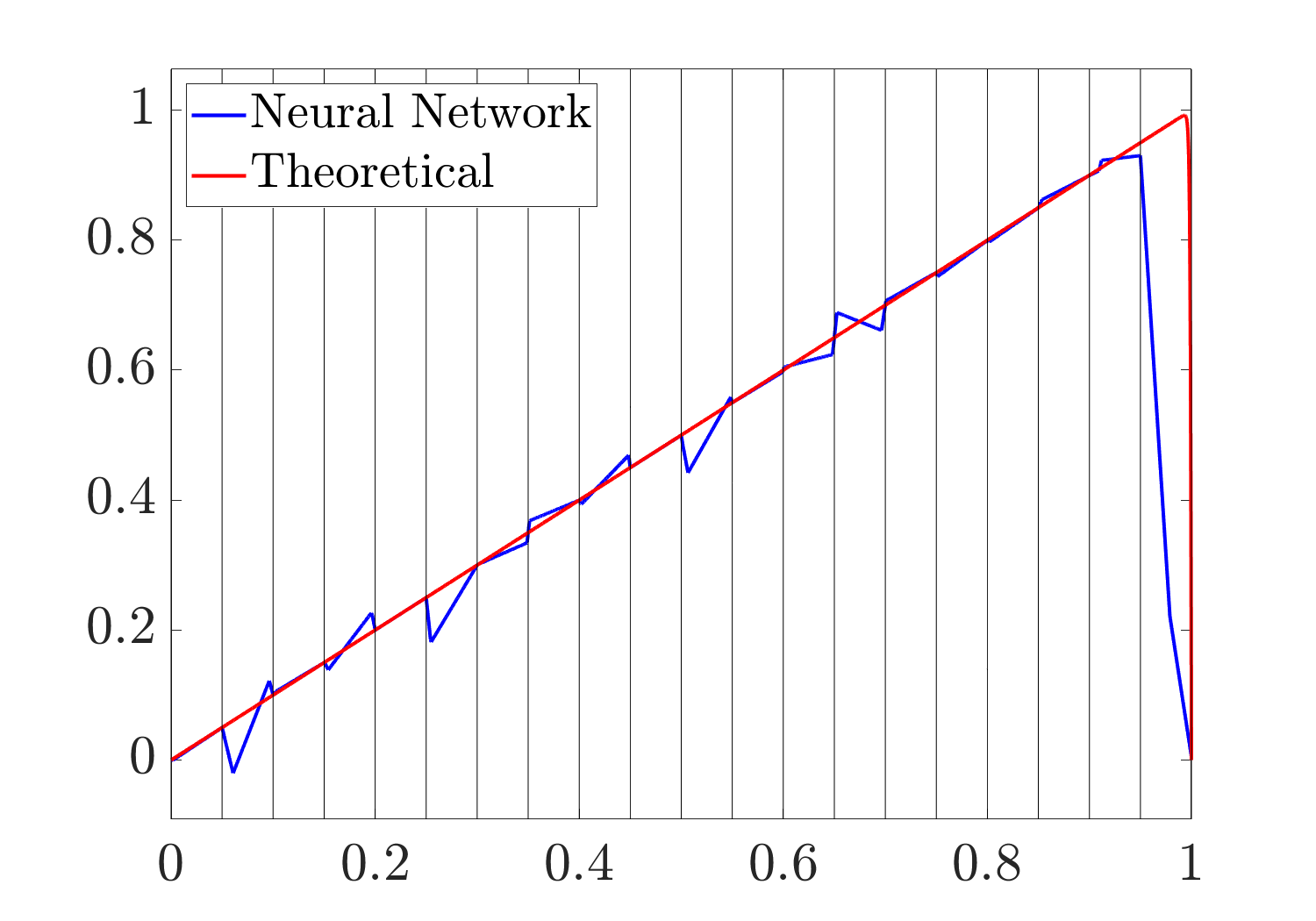}\
        \includegraphics[height=4.5truecm]{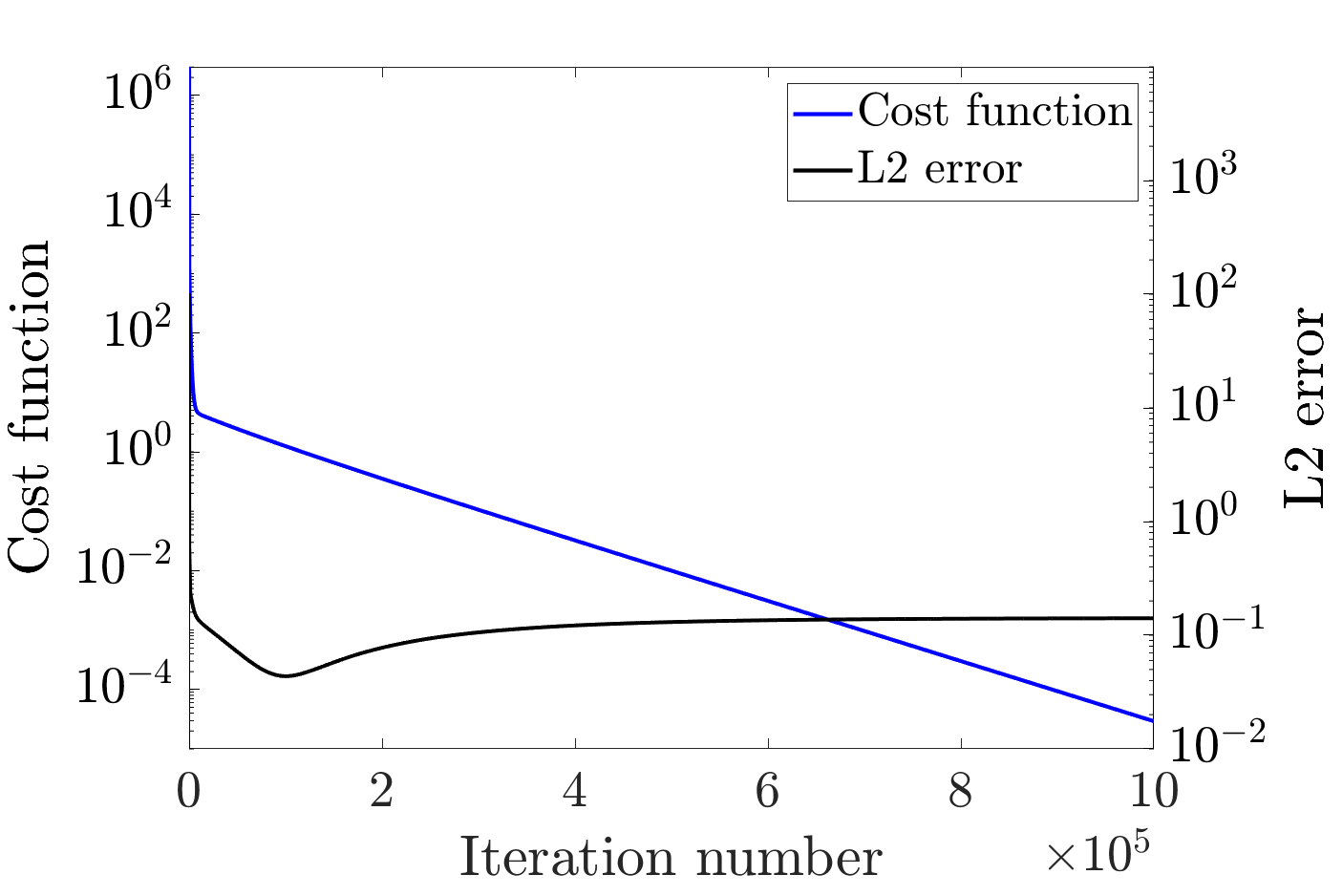}
        \begin{caption}{\label{fig7}$N=20$, $N_{\rm iter}=10^6$, $\eta=10^{-6}$, $\beta=0$.}
        \end{caption}
    \end{center}
\end{figure}
\begin{figure}[h]
    \begin{center}
        \includegraphics[height=4.5truecm]{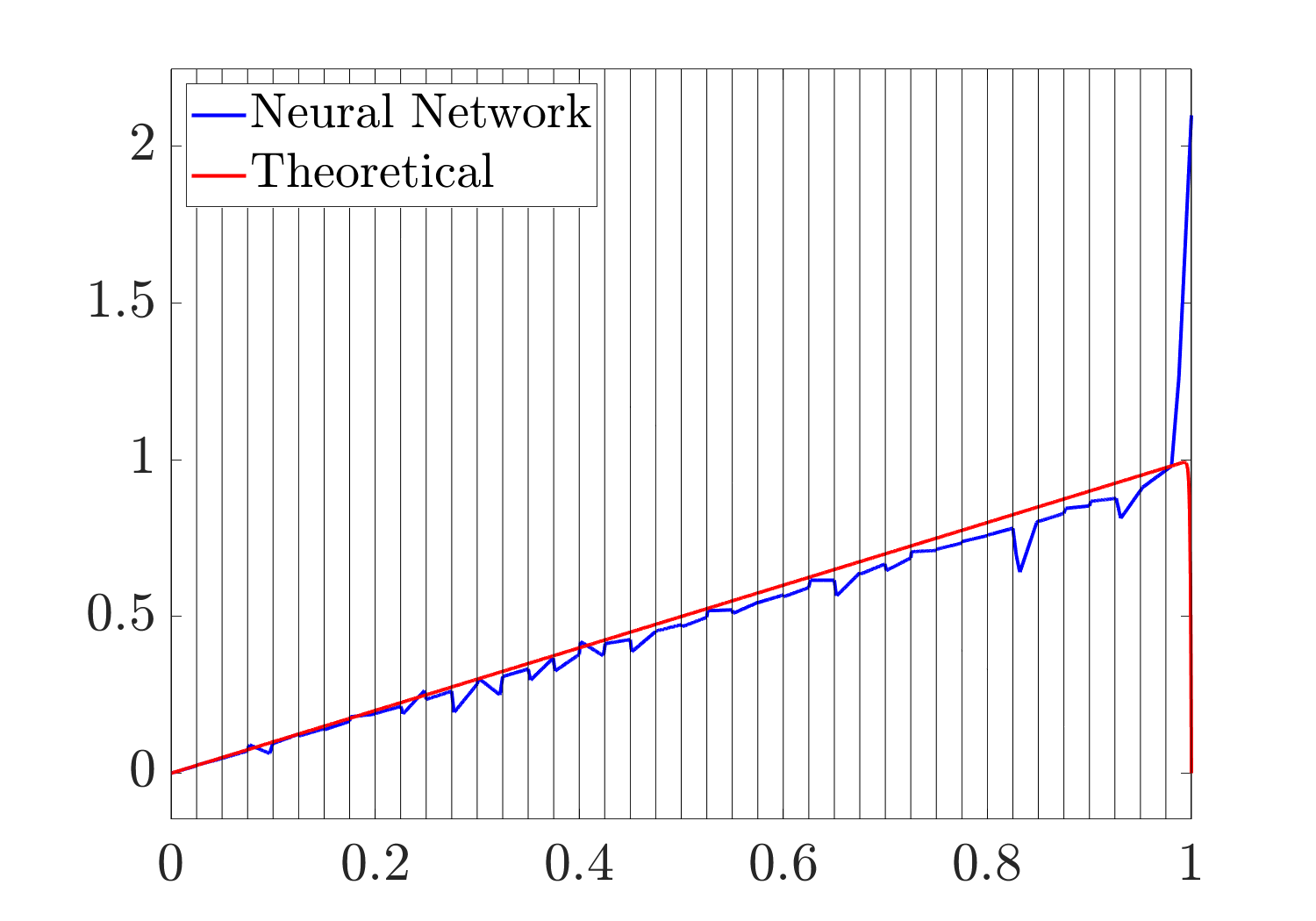}\
        \includegraphics[height=4.5truecm]{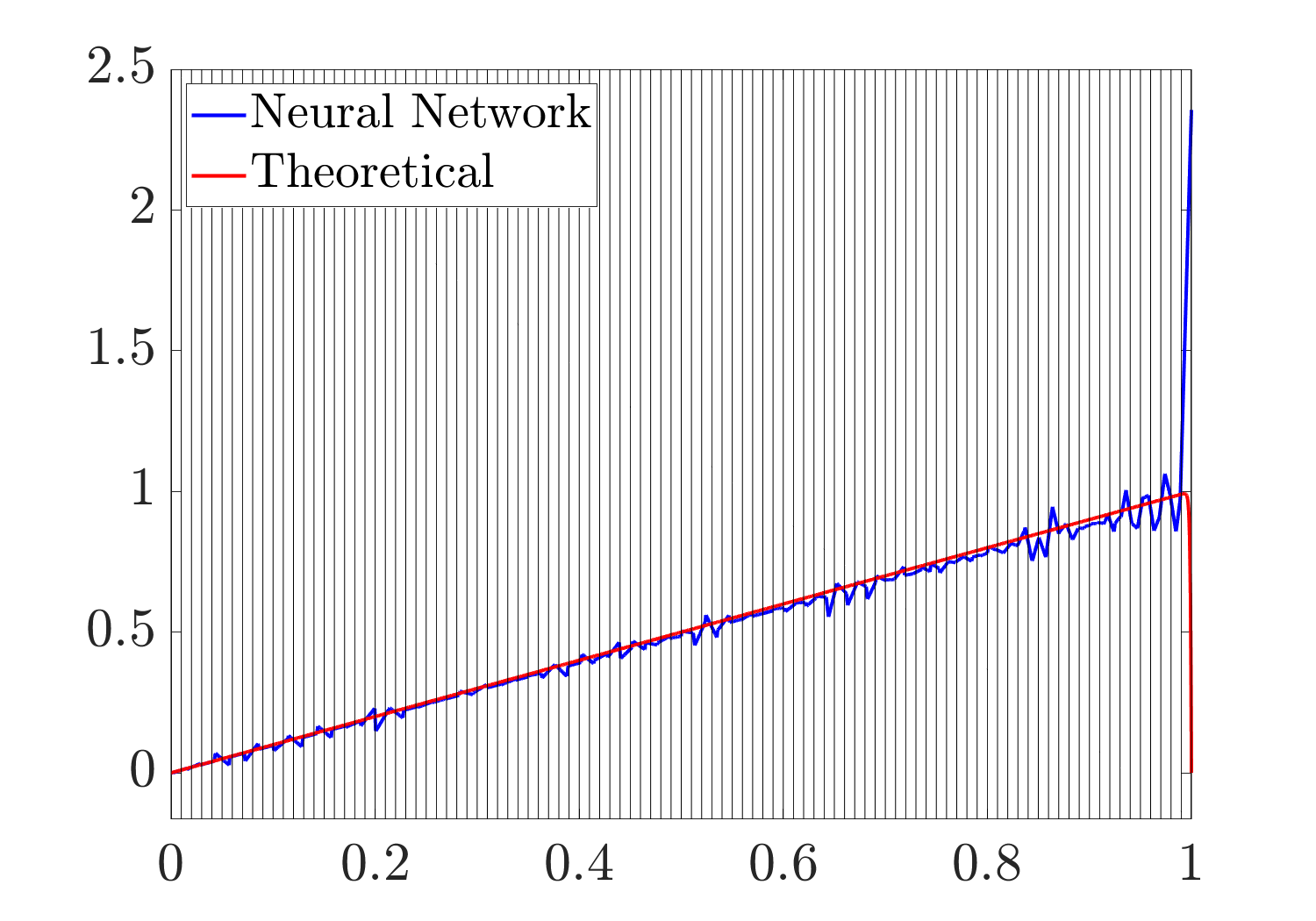}
        \begin{caption}{
            \label{figura7}
            $N=40$, $N_{\rm iter}=10^6$, $\eta=10^{-7}$ and $\beta=0$ on the left and\\
            $N=100$, $N_{\rm iter}=10^6$, $\eta=10^{-8}$ and $\beta=0$ on the right.
            }
        \end{caption}
    \end{center}
\end{figure}

\begin{figure}[h]
    \begin{center}
        \includegraphics[height=4.5truecm]{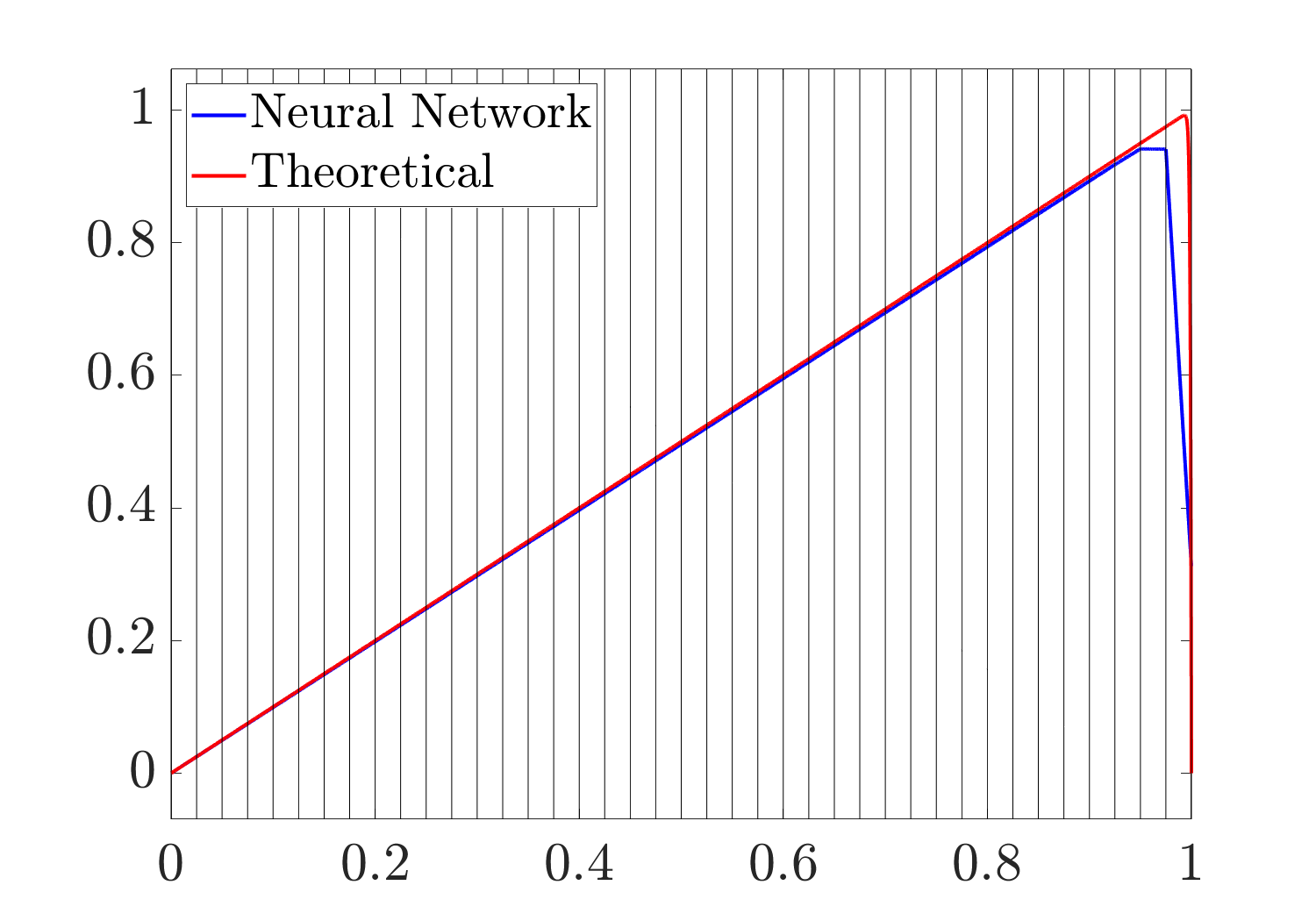}\
        \includegraphics[height=4.5truecm]{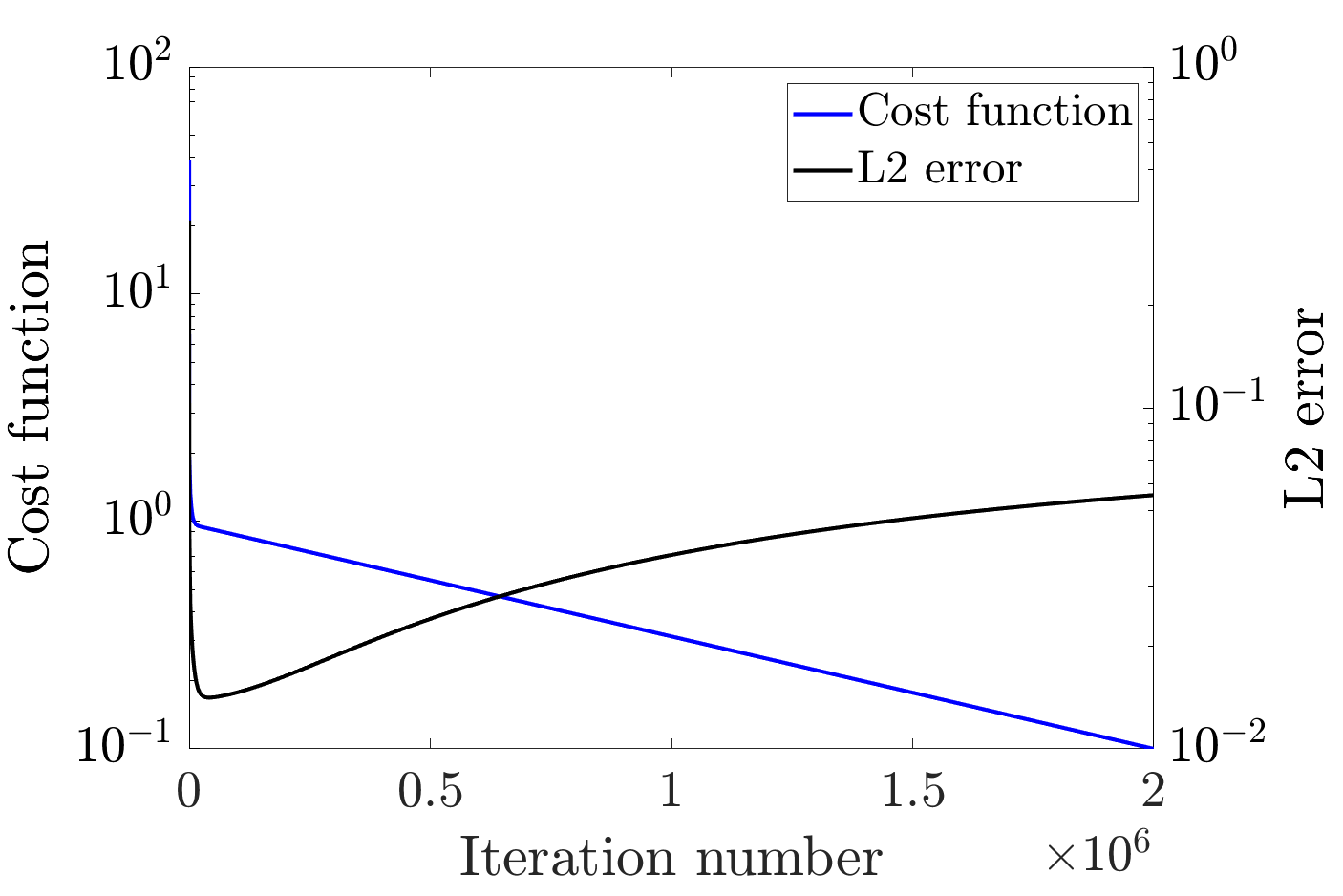}
        \begin{caption}{\label{fig8}$N=40$, $N_{\rm iter}=2 \times 10^6$, $\eta=10^{-7}$, $\beta=0$.}
        \end{caption}
    \end{center}
\end{figure}
\begin{figure}[h]
    \begin{center}
        \includegraphics[height=4.5truecm]{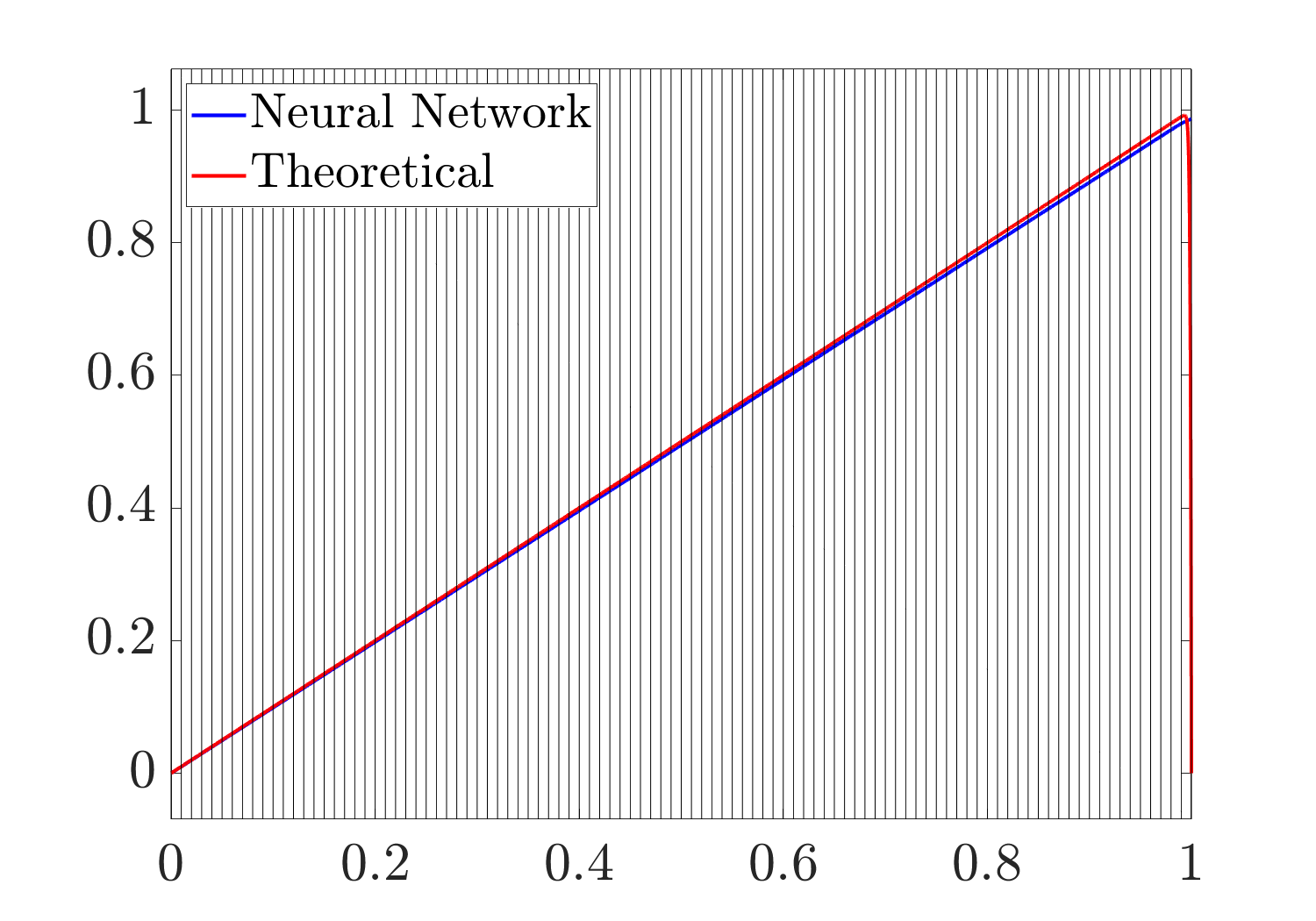}\
        \includegraphics[height=4.5truecm]{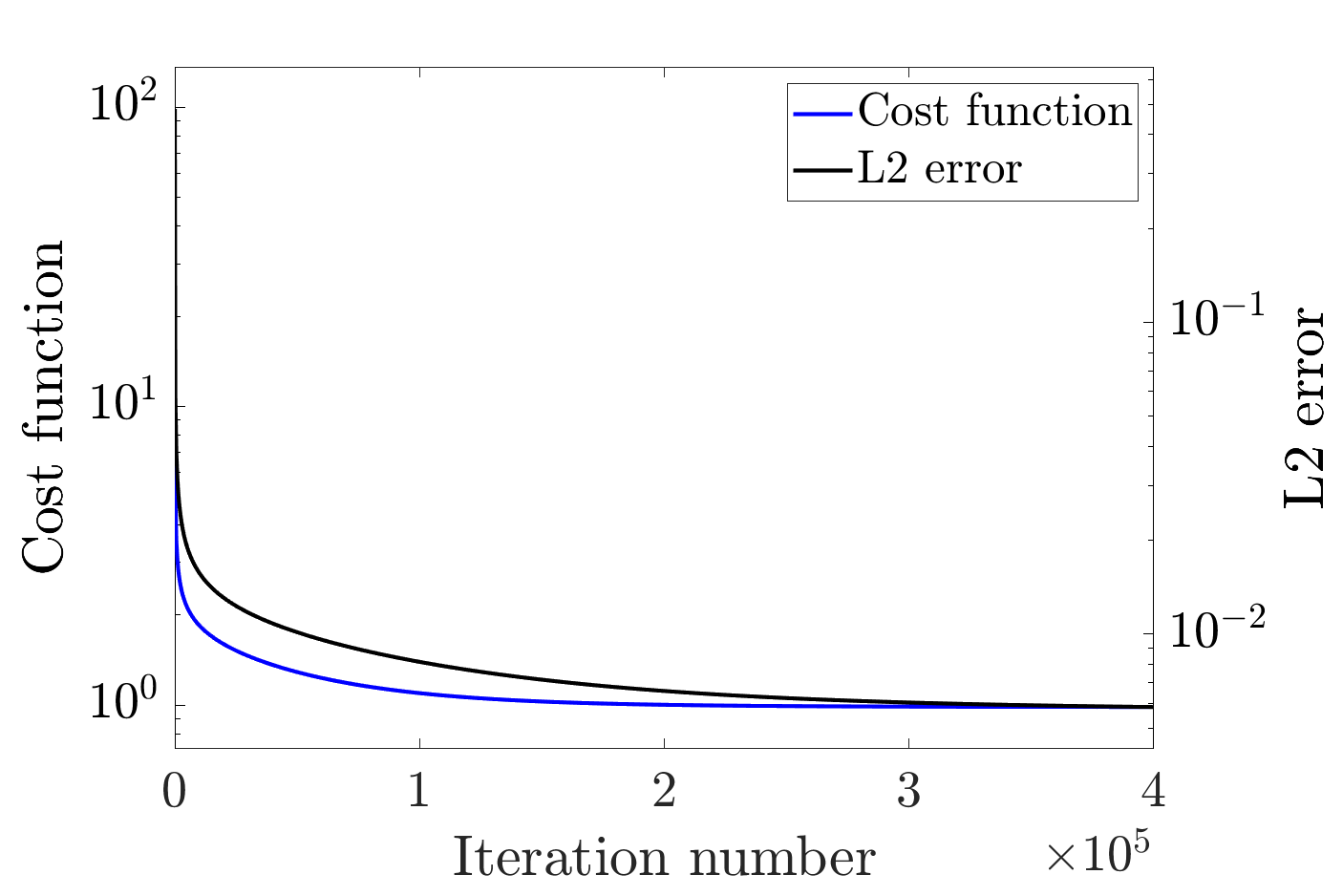}
        \begin{caption}{\label{fig9}$N=100$, $N_{\rm iter}=4 \times 10^5$, $\eta=10^{-9}$, $\beta=0$.}
        \end{caption}
    \end{center}
\end{figure}

As before, we do not obtain a reasonable approximation for the case in which all the weights and biases are free. In the second case, in which we initialize $W^{[2]}$ and $b^{[2]}$ as in \eqref{W2b2}, but then  we allow the neural network to change those values, we obtained similar results  with some peculiarities. The case $N=20$ is indeed very similar. As we can observe in Figure \ref{fig7},
the neural network approximation is a piecewise linear approximation over a refined partition of $\tau_{20}$. Again, the neural network approximation is only a good approximation to the true solution in the nodes of $\tau_{20}$.

In Figure \ref{figura7} we have represented on the left the neural network approximation for $N=40$ and on the right
the one corresponding to $N=100$. Here, we find some differences with respect to the diffusion dominated regime. We observe that in this case the errors at the nodes of the underlying uniform
partition are larger than in the diffusion dominated case and also that the error at $x=1$ is very large. This means that the neural network fails at imposing the condition $F(1)=0$. As it is known, there is a boundary layer at $x=1$ in this extremely simple convection dominated problem.

To conclude, we fix $W^{[2]}$ and $b^{[2]}$ as in \eqref{W2b2}
and leave $W^{[3]}$ free to check if the network is able to reproduce the SUPG linear finite element approximation.
In Table \ref{tabla2} we have collected the errors of the SUPG and neural network approximations.
As expected, the errors are larger than those in Table \ref{tabla1} for the diffusion dominated case, in particular
all the errors in $H^1$ are considerably large.
While for $N=20$ the network is able to reproduce the SUPG approximation, similarly to how it reproduced the FE approximation in the diffusion dominated case (compare with Table \ref{tabla1}),
the situation changes for $N=40$ and $N=100$.
For these values of $N$, the $L^2$ errors are smaller in the neural network approximation,
while the $H^1$ errors are considerably larger.
We believe that the explanation for this is the weak imposition of the right homogeneous Dirichlet boundary condition at $x=1$.
While in the SUPG method this condition is strongly imposed, in the neural network is not.
In fact, the neural network approximations fail to impose the condition $F(1) = 0$ within the specified number of iterations in both cases.
To study this in more detail, we have represented the results of the experiments in Figures \ref{fig8} and \ref{fig9}.
It is surprising to see that, although the approximations are quite accurate
(except for the values at $x=1$ being far from 0 in both cases), the values of the cost function are large.
For $N=40$ (Figure \ref{fig8}) the cost has not yet stabilized, hence we believe that with enough iterations the
neural network would have eventually approached the SUPG approximation, leading to a further increase in the $L^2$ error.
For $N=100$ (Figure \ref{fig9}), the value of the cost function stabilizes at around $1$,
meaning that the neural network is not able to reproduce the SUPG approximation.
However, it yields a better approximation in terms of the $L^2$ error
than SUPG, even though the value of the cost function is large.
Since the limit solution ($\epsilon=0$) of problem \eqref{eq_model} is the function $G(x)=x$, which satisfies only the left
boundary condition, the reason for having a boundary layer at $x=1$ is the strong imposition of the right boundary condition.
As we can see in the left picture of Figure \ref{fig9}, the neural network approximation for $N=100$ is close to the function $G(x)=x$.
On the other hand, $G(x)=x$ is also close in $L^2$ to the solution of problem \eqref{eq_model} for the selected value of $\epsilon$.
Observe that function $G(x)=x$ can be expressed as a neural network of the form \eqref{fix} by taking 
all parameters equal to zero except for $w^{3,1}_1=h$.
Actually, after the training process, the neural network presents a value of $w^{3,1}_1=0.0098 \approx h=0.01$,
and most of the weights in $W^{[3]}$ are close to zero ($< 10^{-10}$).
Thus, in this particular example, the weak imposition of the right boundary condition together with the particular form of the
output of the neural network seems to make the neural network approach the limit solution of \eqref{eq_model}.

\section{Conclusions}
In this note we have constructed a neural network for which the linear finite element approximation of a simple one dimensional boundary value problem is a minimum of the cost function to find out if the neural network is able to reproduce the finite element approximation.
In the first attempt, the optimization problem entails too many free parameters, rendering it highly over-determined and
making it difficult for the network to find the optimal values.
In this case, the approximation is far away from the linear finite element approximation and the theoretical solution.
In subsequent steps, we shift the focus into finding out how much information an a priori blind neural network
with a cost function for which the linear finite element is a minimum needs to correctly learn this approximation.
With this simple example we just want to stand out the problems that PINNs (or similarly based networks) may have in obtaining good
approximations to partial differential equations since, in general, one does not even know the structure of the neural network.
Our conclusion is that in our simple model problem we do not obtain, in general, better approximations than the finite element method.
In fact, we only obtain neural network approximations close to those of the finite element method in our last try,
where we have fixed so many weights and biases that the response of the neural network, regardless of the output values of the optimization problem,
is always a continuous piecewise linear approximation with respect to the partition in which the linear finite element approximation is defined. 

Our conclusions are in agreement with the deeper study of \cite{schonlieb_et_al} in which the authors compare
PINNs and finite elements through various linear and nonlinear partial differential equations.
Their study suggests that for certain classes of PDEs for which classical methods are applicable, PINNs are not able to outperform these.
However, as stated in \cite{schonlieb_et_al}, we believe that PINNs could be efficient in high-dimensional problems for which classical techniques are prohibitively expensive and when combining both PDEs and data.

\bigskip

\noindent {\bf Data availability}
\bigskip

\noindent
The MATLAB code for the training algorithms can be found in the following GitHub repository:
\href{https://github.com/EduardoTerres/Can-Neural-Networks-learn-Finite-Elements.git}
{https://github.com/EduardoTerres/Can-Neural-Networks-learn-Finite-Elements.git}


\begin{thebibliography}{5}
    \bibitem{BH} A. N. Brooks \& T. J. R. Hughes, {\it Streamline upwind/Petrov-Galerkin formulations for convection dominated flows with particular emphasis on the incompressible Navier-Stokes equations}, Comput. Methods Appl. Mech. Engrg. 32, 1982, 199-259.
    \bibitem{GanderWanner2012} M. J. Gander and G. Wanner, {\it From Euler, Ritz, and Galerkin to Modern Computing}, SIAM Review 54, 2012, 627-666.
    \bibitem{schonlieb_et_al} T. G.Grossmann, U.J. Komorowska, J. Latz \& C.-B. Schonlieb, {\it Can Physics-Informed Neural Networks  beat the finite element method?} 2023. 10.48550/arXiv.2302.04107.
    \bibitem{He_et_al} J. He, L. Li, J. Xu \& Ch. Zheng, {\it ReLU Deep Neural Networks and Linear Finite elements}, J. Comp. Math. 38, 2020, 502--527.
    \bibitem{siam_rev1} C. F. Higham \& D. J. Higham, {\it Deep learning: an Introduction for Applied Mathematicians},  SIAM Review, 61, 2019, 860--891.
    \bibitem{mis_mo} S. Mishra \& R. Molinaro, {\it Estimates of the generalization error of physics-informed neural networks for approximating PDEs}, IMA J. Numer. Anal. 43 2023, 1--43.
    \bibitem{siam_rev2} L. Lu, X. Meng, Z. Mao \& G. E. Karniadakis, {\it DeepXDE: A Deep Learning Library for Solving Differential Equations}, SIAM Review, 63, 2021, 208--228.
    \bibitem{Pinkus} A. Pinkus, {\it Approximation theory of the MLP model in neural networks}, Acta Numerica 8, 1999, 143--195.
    
\end{thebibliography}
\end{document}